\newenvironment{proof}[1][Proof]{\textbf{#1 }}{$\square$}
\newtheorem{theorem}{Theorem}
\newtheorem{definition}[theorem]{Definition}
\newtheorem{proposition}[theorem]{Proposition}
\title{Spin$^{T}$ structure and Dirac operator on Riemannian manifolds}
  \ifodd\value{page}\relax
\begin{document}

\date{}
\maketitle
\begin{center}
\c{S}enay BULUT \\[0pt]
(Department of Mathematics, Anadolu University, Eskisehir, TURKEY)\\[0pt]
\texttt{skarapazar@anadolu.edu.tr}

Ali Kemal ERKOCA\\[0pt]
(Department of Mathematics, Anadolu University, Eskisehir, TURKEY)\\[0pt]
\texttt{ake@anadolu.edu.tr}\\[0pt]

\end{center}

\begin{abstract}
In this paper, we describe the group Spin$^T(n)$ and give some properties of this group. We construct Spin$^T$ spinor bundle $\mathbb{S}$ by means of the spinor representation of the group Spin$^T(n)$ and define covariant derivative operator and Dirac operator on $\mathbb{S}$. Finally, Schrödinger-Lichnerowicz-type formula is derived by using these operators.

\end{abstract}

\textbf{Key Words} Spinor bundle, the group Spin$^T(n)$, Dirac operator, Schrödinger-Lichnerowicz-type formula.\newline
\textbf{2000 MR Subject Classification} 15A66, 58Jxx. \label{first}

\section{Introduction}
Spin and Spin$^c$ structures is effective tool to study the geometry and topology of manifolds, especially in dimension four. Spin and Spin$^c$ manifolds have been studied extensively in \cite{F,L,S,N}. For any compact Lie group $G$ the Spin$^G$ structure have been studied in \cite{T}. However, the spinor representation is replaced by a hyperkahler manifold, also called target manifold.  In this paper, we define the Lie group Spin$^T(n)$ as a quotient group by taking $G=S^1\times S^1$. The groups Spin$(n)$ and Spin$^c(n)$ are the subset of Spin$^T(n)$. We define Spin$^T$ structure on any Riemannian manifold. The spinor representation of Spin$^T(n)$ is defined by the help of the spinor representation of Spin$(n)$. By using the spinor representation of Spin$^T(n)$ we construct the Spin$^T$ spinor bundle $\mathbb{S}$. Finally, we give Schrödinger-Lichnerowicz-type formula by using covariant derivative operator and Dirac operator on $\mathbb{S}$.

This paper is organized as follows. We begin with a section introducing the group Spin$^T(n)$. In the following section, we define Spin$^T$ structure on any Riemannian manifold. The final section is dedicated to the construction of the spinor bundle $\mathbb{S}$, the study of the Dirac operator associated to Levi-Civita connection $\nabla$ and Schrödinger-Lichnerowicz-type formula.

\section{The group Spin$^T(n)$}
\begin{definition}
The Spin$^T$ group is defined as 
$$Spin^{T}(n):=(Spin(n)\times S^{1}\times S^{1})/\{\pm1\}.$$
\end{definition}
The elements of Spin$^{T}(n)$ are thus classes $[g,z_{1},z_{2}]$ of pairs $(g,z_{1},z_{2})\in Spin(n)\times S^{1}\times S^{1}$ under the equivalence relation \\
$$(g,z_{1},z_{2})\sim(-g,-z_{1},-z_{2}).$$ \\
We can define the following homomorphisms:
\begin{enumerate}
\item[a.] The map $\lambda^{T}:Spin^{T}(n)\longrightarrow SO(n)$ is given by $\lambda^{T}([g,z_{1},z_{2}])=\lambda(g)$ where the map $\lambda:Spin(n)\rightarrow SO(n)$ is the two-fold covering given by $\lambda(g)(v)=gvg^{-1}$.
\item[b.] $i:Spin(n)\longrightarrow Spin^{T}(n)$ is the natural inclusion map $i(g)=[g,1,1]$. 
\item[c.] $j:S^{1}\times S^{1}\longrightarrow Spin^{T}(n)$ is the inclusion map $j(z_{1},z_{2})=[1,z_{1},z_{2}]$.
\item[d.] $l:Spin^{T}(n)\longrightarrow S^{1}\times S^{1}$ is given by $l([g,z_{1},z_{2}])=(z_{1}^2,z_{1}z_{2})$.
\item[e.] $p:Spin^{T}(n)\longrightarrow SO(n)\times S^{1}\times S^{1}$ is given by $p([g,z_{1},z_{2}])=(\lambda(g),z_{1}^2,z_{1}z_{2})$. Hence, $p=\lambda^{T}\times l$. Here $p$ is a $2$-fold covering.
\end{enumerate}
Thus, we obtain the following commutative diagram where the row and the column are exact.
$$\xymatrix{
         &                    &1       \ar[d]          & \\
         &                    &S^{1}\times S^{1} \ar[d]^{j} \ar[rd] & \\
1 \ar[r] & Spin(n) \ar[r]^{i} \ar[rd]^{\lambda} & Spin^{T}(n) \ar[r]^{l} \ar[d]^{\lambda^{T}}  & S^{1}\times S^{1} \ar[r] & 1\\
         &                    &SO(n)   \ar[d]  &           \\
         &                    &1                                                                          }$$
Moreover, we have the following exact sequence:
$$1\longrightarrow \mathbb{Z}_2\longrightarrow Spin^{T}(n)\overset{p}{\longrightarrow} SO(n)\times S^1\times S^1\longrightarrow 1.$$

\begin{theorem}
The group Spin$^T(n)$ is isomorphic to Spin$^c(n)\times S^1$.
\end{theorem}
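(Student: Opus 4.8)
The plan is to exhibit an explicit Lie group isomorphism rather than to argue abstractly. Recalling that $Spin^c(n)=(Spin(n)\times S^1)/\{\pm1\}$ with the relation $(g,z)\sim(-g,-z)$, the essential difference between the two groups is that in $Spin^T(n)$ the central $\mathbb{Z}_2$ acts simultaneously on \emph{both} circle factors, whereas in $Spin^c(n)\times S^1$ only one circle is entangled with $Spin(n)$ and the other is free. So the first task is to isolate, from the pair $(z_1,z_2)$, a combination that is invariant under the simultaneous sign flip $(z_1,z_2)\mapsto(-z_1,-z_2)$; this combination will become the free $S^1$-factor, while $(g,z_1)$ supplies the $Spin^c(n)$-factor.

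Since $(-z_2)(-z_1)^{-1}=z_2z_1^{-1}$, the quantity $z_2z_1^{-1}$ is exactly such an invariant. Accordingly I would define
$$\Phi:Spin^T(n)\longrightarrow Spin^c(n)\times S^1,\qquad \Phi([g,z_1,z_2])=\big([g,z_1],\,z_2z_1^{-1}\big).$$
The first step is to check that $\Phi$ is well defined: replacing $(g,z_1,z_2)$ by $(-g,-z_1,-z_2)$ leaves $[g,z_1]\in Spin^c(n)$ unchanged (by the defining relation of $Spin^c(n)$) and leaves $z_2z_1^{-1}$ unchanged, so $\Phi$ descends to the quotient. The homomorphism property then follows by a direct computation from the componentwise multiplication $[g,z_1,z_2][h,w_1,w_2]=[gh,z_1w_1,z_2w_2]$, since both coordinates of $\Phi$ are themselves multiplicative.

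It remains to show $\Phi$ is bijective. For injectivity I would compute the kernel: if $\Phi([g,z_1,z_2])$ is the identity then $[g,z_1]=[1,1]$ in $Spin^c(n)$ and $z_2=z_1$; the first condition gives $(g,z_1)\in\{(1,1),(-1,-1)\}$, and in either case $[g,z_1,z_2]=[1,1,1]$ --- here the case $(g,z_1,z_2)=(-1,-1,-1)$ collapses to the identity precisely by the $Spin^T(n)$ relation, which is the one point worth stating carefully. For surjectivity, given $([g,z],w)$ one checks $\Phi([g,z,wz])=([g,z],w)$; equivalently, one may simply write down the candidate inverse $\Psi([g,z],w)=[g,z,wz]$ and verify $\Phi\circ\Psi=\mathrm{id}$ and $\Psi\circ\Phi=\mathrm{id}$. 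Since all the maps are smooth (being induced by polynomial maps on the covering groups), $\Phi$ is a smooth bijective homomorphism and hence an isomorphism of Lie groups.

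I do not expect a genuine obstacle here: the only real content is guessing the correct decoupling coordinate $z_2z_1^{-1}$ (any $S^1$-valued invariant of the $\mathbb{Z}_2$-action that, together with $z_1$, recovers $z_2$ would serve equally well, e.g.\ $z_1z_2$ yields an entirely analogous variant), after which every verification is a one-line computation. The subtlest bookkeeping point is ensuring that the injectivity argument absorbs the sign ambiguity using the $Spin^T(n)$ equivalence rather than the $Spin^c(n)$ one.
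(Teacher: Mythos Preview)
Your argument is correct and matches the paper's approach: the paper defines the map on the cover $Spin(n)\times S^1\times S^1\to Spin^c(n)\times S^1$, $(g,z_1,z_2)\mapsto([g,z_1],z_1z_2)$, checks it is a surjective homomorphism with kernel $\{(1,1,1),(-1,-1,-1)\}$, and invokes the first isomorphism theorem. The only cosmetic differences are that you work directly on the quotient and use the invariant $z_2z_1^{-1}$ instead of $z_1z_2$---a variant you yourself flagged as equivalent.
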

\begin{proof}
We define the map $\varphi$ in the following way:
\begin{equation*}
\begin{array}{lll}
    Spin(n)\times S^1\times S^1&\overset{\varphi}{\rightarrow}  & Spin^c(n)\times S^1   \\
     (g,z_1,z_2) & \mapsto &  ([g,z_1],z_1z_2) 
\end{array}
\end{equation*}
It can be easily shown that $\varphi$ is a surjective homomorphism and the kernel of $\varphi$ is $\{(1,1,1),(-1,-1,-1)\}$. Thus, the group Spin$^T(n)$ is isomorphic to Spin$^c(n)\times S^1$.
\end{proof}

Since Spin$(n)$ is contained in the complex Clifford algebra $\mathbb{C}l_{n}$, the spin representation $\kappa$ of the group Spin$(n)$ extends to a Spin$^{T}(n)$-representation. For an element $[g,z_{1},z_{2}]$ from Spin$^{T}(n)$ and any spinor $\psi\in\Delta_{n}$, the spinor representation $\kappa^{T}$ of Spin$^{T}(n)$ is given by $$\kappa^{T}[g,z_{1},z_{2}]\psi=z_{1}^{2}z_{2}\kappa(g)(\psi).$$

\begin{proposition}
If $n=2k+1$ is odd, then $\kappa^{T}$ is irreducible.
\end{proposition}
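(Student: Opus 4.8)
The plan is to reduce the irreducibility of $\kappa^{T}$ to the classical irreducibility of the spin representation $\kappa$ of $Spin(n)$ in odd dimension. First I would recall the standard structural fact that for $n=2k+1$ the complex spinor module $\Delta_{n}$ carries an \emph{irreducible} representation $\kappa$ of $Spin(n)$, in contrast to the even case where $\Delta_{n}$ splits into two half-spin representations $\Delta_n^{+}\oplus\Delta_n^{-}$. This is a well-known consequence of the structure of the complex Clifford algebra $\mathbb{C}l_{n}$ (for $n$ odd one has $\mathbb{C}l_{n}\cong\mathrm{End}(\Delta_{n})\oplus\mathrm{End}(\Delta_{n})$, and the restriction to $Spin(n)$ is irreducible), so I would simply cite it from \cite{F,L}.

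The key observation is that the extra torus factor $S^{1}\times S^{1}$ enters $\kappa^{T}$ only through the scalar $z_{1}^{2}z_{2}$. Indeed, by the very definition of the representation, $\kappa^{T}[g,z_{1},z_{2}]=(z_{1}^{2}z_{2})\,\kappa(g)$, so every operator in the image of $\kappa^{T}$ is a nonzero scalar multiple of an operator in the image of $\kappa$. Since multiplication by a unit complex number maps any complex linear subspace of $\Delta_{n}$ onto itself, a complex subspace $W\subseteq\Delta_{n}$ is invariant under all $\kappa^{T}[g,z_{1},z_{2}]$ if and only if it is invariant under $\kappa(g)$ for every $g\in Spin(n)$. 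In other words, the $\kappa^{T}$-invariant subspaces of $\Delta_{n}$ coincide exactly with the $\kappa$-invariant subspaces.

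Combining the two steps, I would conclude: by irreducibility of $\kappa$ in odd dimension the only $\kappa$-invariant subspaces are $\{0\}$ and $\Delta_{n}$, and hence these are also the only $\kappa^{T}$-invariant subspaces, which is precisely the irreducibility of $\kappa^{T}$. I do not anticipate any serious obstacle here; the entire content sits in the remark that the torus acts by scalars. The only point that deserves a line of care is to note that as $(z_{1},z_{2})$ ranges over $S^{1}\times S^{1}$ the factor $z_{1}^{2}z_{2}$ is a genuine element of $S^{1}$ (and in fact sweeps out all of $S^{1}$), so that this scalar can neither create nor destroy any invariant subspace; once this is observed, the reduction to the classical odd-dimensional statement is immediate.
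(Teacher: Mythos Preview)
Your proposal is correct and follows essentially the same approach as the paper: both arguments hinge on the observation that $\kappa^{T}[g,z_{1},z_{2}]$ differs from $\kappa(g)$ only by the nonzero scalar $z_{1}^{2}z_{2}$, so $\kappa^{T}$-invariant subspaces coincide with $\kappa$-invariant ones, and one then invokes the classical irreducibility of $\kappa$ for odd $n$. The paper frames this as a proof by contradiction (assuming a proper nontrivial invariant $W$ and dividing out the scalar), while you state the equivalence of invariant subspaces directly, but the content is the same.
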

\begin{proof}
Assume that $\{0\}\neq W\neq \Delta_{2k+1}$ is a Spin$^T$ invariant subspace. Thus, we have $\kappa^{T}[g,z_{1},z_{2}](W)\subseteq W$. That is, $z_{1}^{2}z_{2}\kappa(g)(W)\subseteq W$. In this case, for every $w\in W$ there exists a $w'\in W$ such that $z_{1}^{2}z_{2}\kappa(g)(w)= w'$. As $\kappa(g)(w)=\displaystyle\frac{1}{z_{1}^{2}z_{2}}w'\in W$ and the representation $\kappa$ of Spin$(n)$ is irreducible if $n$ is odd, this is a contradiction. The representation $\kappa^{T}$ of Spin$^{T}(n)$ has to be irreducible for $n=2k+1$.

\end{proof}

\begin{proposition}
If $n=2k$ is even, then the spinor space $\Delta_{2k}$ decomposes into two subspaces $\Delta_{2k}=\Delta_{2k}^+\oplus \Delta_{2k}^-$.
\end{proposition}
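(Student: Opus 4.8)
The plan is to exhibit a single involution of $\Delta_{2k}$ that commutes with the whole Spin$^{T}(2k)$-action and to define $\Delta_{2k}^{\pm}$ as its $(\pm1)$-eigenspaces. The natural candidate is the image under the spin representation $\kappa$ of the complex volume element
$$\omega_{\mathbb{C}}=\ri^{k}\,e_{1}e_{2}\cdots e_{2k}\in\mathbb{C}l_{2k},$$
where $e_{1},\dots,e_{2k}$ is an orthonormal basis of the underlying space. Everything reduces to two algebraic identities for $\omega_{\mathbb{C}}$, after which the decomposition is essentially formal.

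First I would record these two facts. Using $e_{i}e_{j}=-e_{j}e_{i}$ for $i\neq j$ and $e_{i}^{2}=-1$, a direct computation gives $(e_{1}\cdots e_{2k})^{2}=(-1)^{k}$, so that the normalization factor $\ri^{k}$ is chosen precisely to yield $\omega_{\mathbb{C}}^{2}=\ri^{2k}(-1)^{k}=1$. Second, moving a generator $e_{j}$ through the product $e_{1}\cdots e_{2k}$ produces the sign $(-1)^{2k-1}=-1$, so $\omega_{\mathbb{C}}$ anticommutes with every vector. Hence $\omega_{\mathbb{C}}$ commutes with every product of an even number of generators, that is, with the entire even subalgebra $\mathbb{C}l_{2k}^{0}$, and in particular with each $g\in Spin(2k)\subset\mathbb{C}l_{2k}^{0}$. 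This is exactly the step where the evenness of $n$ is essential: for odd $n$ the same computation yields $+1$, making $\omega_{\mathbb{C}}$ central in the whole algebra, which is why no splitting occurs there (consistent with the preceding proposition).

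Next I would pass to the representation. Since $\kappa$ is an algebra homomorphism, $\kappa(\omega_{\mathbb{C}})^{2}=\mathrm{id}$, so $\kappa(\omega_{\mathbb{C}})$ is an involution and $\Delta_{2k}$ is the direct sum of its $(+1)$- and $(-1)$-eigenspaces, which I define to be $\Delta_{2k}^{+}$ and $\Delta_{2k}^{-}$. Because $\omega_{\mathbb{C}}$ commutes with each $g\in Spin(2k)$, the operator $\kappa(\omega_{\mathbb{C}})$ commutes with $\kappa(g)$, so both eigenspaces are preserved by $\kappa$. Finally, the Spin$^{T}(2k)$-action satisfies $\kappa^{T}[g,z_{1},z_{2}]=z_{1}^{2}z_{2}\,\kappa(g)$, a scalar multiple of $\kappa(g)$; since scalars preserve every subspace, the spaces $\Delta_{2k}^{\pm}$ are $\kappa^{T}$-invariant as well, giving the desired decomposition $\Delta_{2k}=\Delta_{2k}^{+}\oplus\Delta_{2k}^{-}$ into Spin$^{T}$-submodules.

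The main obstacle is entirely the sign-bookkeeping in the two identities $\omega_{\mathbb{C}}^{2}=1$ and $\omega_{\mathbb{C}}e_{j}=-e_{j}\omega_{\mathbb{C}}$, both of which hinge on the parity of $n$ and on the normalization $\ri^{k}$; once these are in hand the rest is formal. I would also remark, although the statement does not require it, that each eigenspace has dimension $2^{k-1}$ and is therefore nonzero, confirming that the splitting is genuine and coincides with the usual half-spinor modules.
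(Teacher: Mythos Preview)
Your argument is correct and follows essentially the same route as the paper: both use the standard half-spinor decomposition $\Delta_{2k}=\Delta_{2k}^{+}\oplus\Delta_{2k}^{-}$ under $Spin(2k)$ and then observe that $\kappa^{T}[g,z_{1},z_{2}]=z_{1}^{2}z_{2}\,\kappa(g)$ differs from $\kappa(g)$ only by a scalar, so the same subspaces remain invariant. The only difference is one of detail: the paper simply invokes the $Spin(2k)$-splitting as a known fact, whereas you reprove it from scratch via the complex volume element $\omega_{\mathbb{C}}=\ri^{k}e_{1}\cdots e_{2k}$, checking $\omega_{\mathbb{C}}^{2}=1$ and $\omega_{\mathbb{C}}g=g\,\omega_{\mathbb{C}}$ for $g\in Spin(2k)$. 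Your version is more self-contained and makes explicit why even $n$ is needed, but the underlying logic is identical.
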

\begin{proof}
We know that the Spin$(n)$ representation $\Delta_{2k}$ decomposes into two subspaces $\Delta_{2k}^+$ and $\Delta_{2k}^-$. Thus, we obtain $z_{1}^{2}z_{2}\kappa(g)(\Delta_{2k}^+)\subseteq \Delta_{2k}^+$ and $z_{1}^{2}z_{2}\kappa(g)(\Delta_{2k}^-)\subseteq \Delta_{2k}^-$. Namely, $\kappa^{T}[g,z_{1},z_{2}](\Delta_{2k}^+)\subseteq \Delta_{2k}^+$ and $\kappa^{T}[g,z_{1},z_{2}](\Delta_{2k}^-)\subseteq \Delta_{2k}^-$. Hence, the Spin$^T(2k)$ representation $\Delta_{2k}$ decomposes into two subspaces $\Delta_{2k}^+$ and $\Delta_{2k}^-$.
It can be easily seen that the Spin$^T(2k)$ representation $\Delta_{2k}^{\pm}$ is irreducible.
\end{proof}

The Lie algebra of the group Spin$^{T}(n)$ is described by $$\mathfrak{spin}^{T}(n)=\mathfrak{m}_2\oplus i\mathbb{R} \oplus i\mathbb{R}.$$ The differential $p_*:\mathfrak{spin}^{T}(n)\rightarrow \mathfrak{so}(n)\oplus  i\mathbb{R}\oplus i\mathbb{R}$ is defined by 
$$p_*(e_{\alpha}e_{\beta},\lambda i,\mu i)=(2E_{\alpha\beta},2\lambda i,(\lambda+\mu)i)$$ where $\lambda$ and $\mu$ are any real numbers and $E_{\alpha\beta}$ is the $n\times n$ matrix with entries $(E_{\alpha\beta})_{\alpha\beta}=-1$, $(E_{\alpha\beta})_{\beta\alpha}=1$ and all others are equal to zero.
The inverse of the differential $p_*$ is given by $$p_*^{-1}(E_{\alpha\beta},\lambda i,\mu i)=(\displaystyle\frac{1}{2}e_{\alpha}e_{\beta},\displaystyle\frac{1}{2}\lambda i,(\mu-\frac{1}{2}\lambda) i).$$

\section{Spin$^{T}$ structure}
\begin{definition}
A Spin$^{T}$ structure on an oriented Riemannian manifold $(M^n,g)$ is a Spin$^{T}(n)$ principal bundle $P_{Spin^T(n)}$ together with a smooth map \\ $\Lambda:P_{Spin^T(n)}\rightarrow P_{SO(n)}$ such that the following diagram commutes:
\begin{equation*}
\xymatrix{\,P_{Spin^T(n)}\times Spin^T(n) \ar[r]^{\hspace{.1cm}}
\ar[d]^{\Lambda\times\lambda^T } &
P_{Spin^T(n)} \ar[d]^{\Lambda}\\
P_{SO(n)}\times SO(n) \ar[r] &
P_{SO(n)}}
\end{equation*}

\end{definition}
From above definition we can construct a two-fold covering map $$\Pi:P_{Spin^T(n)}\rightarrow P_{SO(n)}\times P_{S^1\times S^1}.$$
Given a Spin$^T$ structure $(P_{Spin^T(n)},\Lambda)$, the map $\lambda^{T}:Spin^{T}(n)\longrightarrow SO(n)$ induces an isomorphism $$P_{Spin^T(n)}/{S^1\times S^1}\cong P_{SO(n)}.$$ In similar way, $Spin^T(n)/_{Spin(n)}\cong S^1\times S^1$ implies the isomorphism $$P_{Spin^T(n)}/{Spin(n)}\cong P_{S^1\times S^1}.$$
Note that on account of the inclusion map $i:Spin(n)\rightarrow Spin^T(n)$, every spin structure on $M$ induces a Spin$^T$ structure. Similarly, since there exists a inclusion map $Spin^c(n)\rightarrow Spin^T(n)$, every Spin$^c$ structure on $M$ induces a Spin$^T$ structure.

\section{Spinor bundle and Dirac operator}
Let $(M^n,g)$ be an oriented connected Riemannian manifold and $P_{SO(n)}\rightarrow M$ the $SO(n)-$principal bundle of positively oriented orthonormal frames. The Levi-Civita connection $\nabla$ on $P_{SO(n)}$ determine a connection $1-$form $\omega$ on the principal bundle $P_{SO(n)}$ with values in $\mathfrak{so}(n)$, locally given by
\begin{equation*}
\omega^e=\sum_{i<j}g(\nabla e_i,e_j)E_{ij}
\end{equation*}
where $e=\{e_1,\ldots,e_n\}$ is a local section of $P_{SO(n)}$ and $E_{ij}$ is the $n\times n$ matrix with entries $(E_{ij})_{ij}=-1$, $(E_{ij})_{ji}=1$ and all others are equal to zero.

We fix a connection $$(A,B):TP_{S^1\times S^1}\rightarrow i\mathbb{R}\oplus i\mathbb{R}$$ on the principal bundle $P_{S^1\times S^1}.$ The connections $\omega$ and $(A,B)$ induce a connection $$\omega\times (A,B):T(P_{SO(n)}\times P_{S^1\times S^1})\rightarrow \mathfrak{so}(n)\oplus i\mathbb{R}\oplus i\mathbb{R}$$ on the fibre product bundle $P_{SO(n)}\times P_{S^1\times S^1}$. Now we can define a connection $1-$form $\widetilde{\omega\times (A,B)}$ on the principal bundle $P_{Spin^T(n)}$ such that the following diagram commutes: 
\begin{equation*}
\xymatrix{\,TP_{Spin^T(n)} \ar[r]^{\hspace{.1cm}}
\ar[d]^{\Pi_* }  \ar[r]^{\widetilde{\omega\times (A,B)}\hspace{1cm}}&
\mathfrak{spin}^T(n)=\mathfrak{m}_2\oplus i\mathbb{R}\oplus i\mathbb{R} \ar[d]^{p_*}\\
T(P_{SO(n)}\times P_{S^1\times S^1}) \ar[r]  \ar[r]^{\omega\times (A,B)\hspace{-.8cm}}  &
\mathfrak{so}(n)\oplus i\mathbb{R} \oplus i\mathbb{R}  }
\end{equation*}
That is, the equality $$p_*\circ \widetilde{\omega\times (A,B)}=\omega\times (A,B)\circ \Pi_*$$ holds.
\begin{definition}
The spinor bundle of a Spin$^T$ manifold is defined as the associated vector bundle $$\mathbb{S}=P_{Spin^T(n)}\times_{\kappa^T}\Delta_n$$where $\kappa^T:Spin^T(n)\rightarrow GL(\Delta_n)$ is the spinor representation of $Spin^T(n)$. In case of $n=2k$ the spinor bundle splits into the sum of two subbundles $\mathbb{S}^+$ and $\mathbb{S}^-$ such that $$\mathbb{S}=\mathbb{S}^+\oplus \mathbb{S}^-, \hspace{1cm}\mathbb{S}^{\pm}=P_{Spin^T(n)}\times_{{\kappa^T}^{\pm}}\Delta_n^{\pm}.$$
\end{definition}
Any spinor field $\psi$ can be identified with the map $\psi:P_{Spin^T(n)}\rightarrow \Delta_n$ satisfying the transformation rule $\psi(pg)={\kappa^T}(g^{-1})\psi(p)$. The absolute differential of a section $\psi$ with respect to $\widetilde{\omega\times (A,B)}$ determines a covariant derivative $$\widetilde{\nabla}:\Gamma(\mathbb{S})\rightarrow \Gamma(T^*M\otimes\mathbb{S})$$ given by $$\widetilde{\nabla}\psi=d\psi+\kappa^T_{*1}(\widetilde{\omega\times (A,B)})\psi$$where $\kappa^T_{*1}:\mathfrak{spin}^T(n)\rightarrow End(\Delta_n)$ is the derivative of $\kappa$ at the identity \\ $1\in Spin^T(n)$. It can be also shown that $$\kappa^T_{*1}(e_{\alpha}e_{\beta},\lambda i,\mu i)=\kappa(e_{\alpha}e_{\beta})+(2\lambda i+\mu i)Id$$
where $\lambda$ and $\mu$ are any real numbers and $\kappa$ is the spin representation of the group Spin$(n)$.

Now we give the local formulas for connections. Fix a section $s:U\rightarrow P_{S^1\times S^1}$ of the principal bundle $P_{S^1\times S^1}$. Then, we obtain the local connection form $$(A^s,B^s):TU\rightarrow i\mathbb{R}\oplus i\mathbb{R}$$where $A^s,B^s:TU\rightarrow i\mathbb{R}$. $e\times s:U\rightarrow P_{SO(n)}\times P_{S^1\times S^1}$ is a local section of the fiber product bundle $P_{SO(n)}\times P_{S^1\times S^1}$. $\widetilde{e\times s}$ is a lift of this section to the two-fold covering $\Pi:P_{Spin^T(n)}\rightarrow P_{SO(n)}\times P_{S^1\times S^1}$. The local connection form $\widetilde{\omega\times (A,B)}^{(\widetilde{e\times s})}$ on the principal bundle $P_{Spin^T(n)}$ is given by the formula
$$\widetilde{\omega\times (A,B)}^{(\widetilde{e\times s})}=\left(\displaystyle\frac{1}{2}\sum_{i<j}g(\nabla e_i,e_j)e_{i}e_j,\frac{1}{2}A^s,B^s-\frac{1}{2}A^s\right)$$
Hence, this connection form induces a connection $\widetilde{\nabla}$ on the spinor bundle $\mathbb{S}$. We can locally describe $\widetilde{\nabla}$ by
\begin{equation}
\widetilde{\nabla}_X\psi=d\psi(X)+\displaystyle\frac{1}{2}\sum_{i<j}g(\nabla_Xe_i,e_j)e_ie_j\psi+\frac{1}{2}A^s\psi+B^s\psi
\end{equation}
where $\psi:U\rightarrow \Delta_n$ is a section of the spinor bundle $\mathbb{S}$.
\begin{definition}
The first order differential operator $$D=\mu\circ \widetilde{\nabla}:\Gamma(\mathbb{S})\overset{\widetilde{\nabla}}{\rightarrow}\Gamma(T^*M\otimes \mathbb{S})\overset{\mu}{\rightarrow} \Gamma(\mathbb{S}) $$where $\mu$ denotes Clifford multiplication, is called the Dirac operator.
\end{definition} 
The Dirac operator $D$ is locally given by
\begin{equation}
D\psi=\displaystyle\sum_{i=1}^n e_i\cdot \widetilde{\nabla}_{e_i}\psi
\end{equation}
where $\{e_1,\ldots,e_n\}$ is a local orthonormal frame on the manifold $M$.

The Dirac operator has the following property:
\begin{theorem}
Let $f$ be a smooth function and $\psi\in \Gamma(\mathbb{S})$ be a spinor
field. Then,
  $$D(f\cdot\psi)=(gradf\cdot\psi)+fD\psi.$$
\end{theorem}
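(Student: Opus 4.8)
The plan is to reduce the statement to the derivation (Leibniz) property of the connection $\widetilde{\nabla}$ and then expand the local formula for $D$.

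First I would establish that $\widetilde{\nabla}$ obeys the Leibniz rule with respect to multiplication by a smooth function $f$, namely
$$\widetilde{\nabla}_X(f\psi)=df(X)\,\psi+f\,\widetilde{\nabla}_X\psi.$$
This follows directly from the local expression $(1)$: replacing $\psi$ by $f\psi$, the ordinary Leibniz rule for the exterior derivative splits the term $d(f\psi)(X)$ into $df(X)\,\psi+f\,d\psi(X)$, while the three remaining terms (the $\tfrac{1}{2}\sum_{i<j}g(\nabla_Xe_i,e_j)e_ie_j$ term, the $\tfrac{1}{2}A^s$ term, and the $B^s$ term) are pointwise-linear operators acting on the fibre and therefore simply acquire the scalar factor $f$. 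Collecting the contributions yields the displayed identity; this is just the assertion that $\widetilde{\nabla}$ is a genuine connection, which it is by construction as an absolute differential.

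Next I would substitute this into the local formula $(2)$ for the Dirac operator. Since Clifford multiplication by $e_i$ is linear on each fibre, we obtain
$$D(f\psi)=\sum_{i=1}^n e_i\cdot\widetilde{\nabla}_{e_i}(f\psi)=\sum_{i=1}^n e_i\cdot\big(df(e_i)\,\psi+f\,\widetilde{\nabla}_{e_i}\psi\big)=\sum_{i=1}^n df(e_i)\,e_i\cdot\psi+f\sum_{i=1}^n e_i\cdot\widetilde{\nabla}_{e_i}\psi.$$
The second sum is exactly $fD\psi$ by the definition $(2)$. For the first sum I would invoke the orthonormal expansion of the gradient: since $\{e_1,\ldots,e_n\}$ is orthonormal and $g(gradf,e_i)=df(e_i)=e_if$, we have $gradf=\sum_{i=1}^n df(e_i)\,e_i$, so that $\sum_{i=1}^n df(e_i)\,e_i\cdot\psi=gradf\cdot\psi$. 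Combining the two pieces gives $D(f\psi)=gradf\cdot\psi+fD\psi$, as claimed.

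I expect the only step requiring genuine care to be the first one, namely verifying from $(1)$ that $\widetilde{\nabla}$ is a derivation over $C^\infty(M)$: all the algebraic terms in $(1)$ other than $d\psi$ are zeroth-order (tensorial) in $\psi$, so only the exterior-derivative term produces the correction $df(X)\,\psi$. Once this is in hand, the remaining manipulations are formal, relying solely on the fibrewise linearity of Clifford multiplication and the orthonormal expansion of the gradient.
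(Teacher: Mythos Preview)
Your proof is correct and follows essentially the same route as the paper's: apply the Leibniz rule for $\widetilde{\nabla}$ inside the local formula $D\psi=\sum_i e_i\cdot\widetilde{\nabla}_{e_i}\psi$, then identify $\sum_i e_i(f)\,e_i$ with $\mathrm{grad}\,f$. The only difference is that you spell out from $(1)$ why $\widetilde{\nabla}$ is a derivation over $C^\infty(M)$, whereas the paper simply uses $\widetilde{\nabla}_{e_i}(f\psi)=e_i(f)\psi+f\widetilde{\nabla}_{e_i}\psi$ without comment.
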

\begin{proof}
By using the definition of the Dirac operator $D$ we
can compute $D(f\cdot\psi)$ as follows:
  $$\begin{array}{ccl}
    D(f\cdot\psi) & = & \overset{n}{\underset{i=1}{\sum }} e_i\cdot\widetilde{\nabla}_{e_i}(f\cdot\psi) \\
     & = & \overset{n}{\underset{i=1}{\sum }}e_i\cdot(e_i(f)\cdot\psi+f\widetilde{\nabla}_{e_i}\psi)\\
     & = & \overset{n}{\underset{i=1}{\sum }}e_i(f)e_i\cdot\psi+f\overset{n}{\underset{i=1}{\sum }}e_i\cdot\widetilde{\nabla}_{e_i}\psi\\
     & = & (grad f)\cdot\psi+fD\psi \\
  \end{array}$$
\end{proof}

Now we can define the Laplace operator on the spinor bundle $\mathbb{S}$.
\begin{definition}
  Let $\psi\in \Gamma(\mathbb{S})$ be a spinor field. The Laplace operator $\Delta$ on
  spinors is defined by
    \begin{equation}
      \Delta\psi=-\overset{n}{\underset{i=1}{\sum
      }}\left(\widetilde{\nabla}_{e_i}\widetilde{\nabla}_{e_i}\psi+div(e_i)\widetilde{\nabla}_{e_i}\psi\right).
    \end{equation}
\end{definition}

\subsection{Schrödinger-Lichnerowicz type formula}
The square $D^2$ of the Dirac operator and the Laplace operator $\Delta$ are second order differential operators. We derive Schrödinger-Lichnerowicz type formula by computing their difference $D^2-\Delta$.

The curvature $R^{\mathbb{S}}$ of the spinor covariant derivative $\widetilde{\nabla}$ is an $End(\mathbb{S})$ valued $2-$form by
$$
R^{\mathbb{S}}(X,Y)\psi=\widetilde{\nabla} _{X} \widetilde{\nabla} _{Y} \psi - \widetilde{\nabla} _{Y}\widetilde{\nabla} _{X} \psi -\widetilde{\nabla}_{[X,Y]}\psi
$$
where $\psi \in \Gamma(\mathbb{S})$ and $X,Y \in \Gamma(TM)$.
Now we want to describe $R^{\mathbb{S}}$ in terms of the curvature tensor $R$.

Let $\Omega^{\omega}:TP_{SO(n)}\times
TP_{SO(n)}\rightarrow \mathfrak{so}(n)$ be
the curvature form of the Levi-Civita connection with the
components
$$\Omega^{\omega}={\underset{i<j}{\sum}}\Omega_{ij}E_{ij}$$ where
$\Omega_{ij}:TP_{SO(n)}\times
TP_{SO(n)}\rightarrow \mathbb{R}$. The commutative
diagram defining the connection $\widetilde{\omega\times (A,B)}$ implies that the curvature
form of $\widetilde{\omega\times (A,B)}$ is
$$\Omega^{\widetilde{\omega\times (A,B)}}=\dfrac{1}{2}{\underset{i<j}{\sum}}\Pi^*(\Omega_{ij})e_ie_j\oplus \frac{1}{2}\Pi^*(dA)\oplus \Pi^*(dB).$$
Hence the $2-$form $R^{\mathbb{S}} $ with values in the spinor
bundle $\mathbb{S}$ is obtained by the following formula:
$$R^{\mathbb{S}}(.,.) \psi=\dfrac{1}{2}{\underset{i<j}{\sum}}\Omega_{ij}e_ie_j \cdot \psi+\frac{1}{2}dA\cdot \psi+dB\cdot \psi
.$$

Let $\{e_1,\ldots,e_n\}$ be
orthonormal frame field,
$\Omega_{ij}(X,Y)=g(R(X,Y)e_i,e_j)$ the
components of the curvature form of the Levi-Civita connection,\\
$X=\overset{n}{\underset{k=1}{\sum}}X^k e_k$ and
$Y=\overset{n}{\underset{l=1}{\sum}} Y^l e_l$ be vector fields on
the Riemannian manifold $M$. Then we have
$$\begin{array}{ccl}
  \Omega_{ij}(X,Y) & = & g(R(X,Y)e_i,e_j) \\
   & = & \overset{n}{\underset{k,l=1}{\sum}}R_{klij}X^kY^l \\
   & = & \overset{n}{\underset{k,l=1}{\sum}}R_{klij}e^k(X)e^l(Y) \\
   & = & \dfrac{1}{2}\overset{n}{\underset{k,l=1}{\sum}}R_{klij}(e^k\wedge e^l)(X,Y). \\
\end{array}$$where $\{e^1, \ldots,e^{n}\}$ is the frame dual to
$\{e_1,\ldots,e_{n}\}$. Thus, we obtain the following local formula
for the curvature form
$$\Omega^{\widetilde{\omega\times (A,B)}}=\dfrac{1}{4}\overset{}{\underset{i<j}{\sum}}\overset{n}{\underset{k,l=1}{\sum}}R_{klij}(e^k\wedge e^l)e_ie_j+\frac{1}{2}dA+ dB
$$
and the 2-form $R^{\mathbb{S}}(.,.)$ is calculated as follows:
$$R^{\mathbb{S}}(.,.)\psi=\dfrac{1}{4}\overset{}{\underset{i<j}{\sum}}\overset{n}{\underset{k,l=1}{\sum}}R_{klij}(e^k\wedge e^l)e_ie_j\cdot\psi+\frac{1}{2}dA\cdot \psi+ dB\cdot \psi
. $$

By using the above properties of the curvature form $R^{\mathbb{S}}$ on
spinor bundle $\mathbb{S}$ we deduce the following result:
\begin{proposition}
Let $Ric$ be the Ricci tensor. Then, the
following relation holds:

\begin{equation}
\label{36}
\overset{n}{\underset{\alpha=1}{\sum}}e_{\alpha}\cdot R^{\mathbb{S}}(X,e_{\alpha})\psi=-\dfrac{1}{2}Ric(X)\cdot \psi+\frac{1}{2}(X\mathrel{\lrcorner} dA)\cdot \psi+(X\mathrel{\lrcorner}dB)\cdot \psi
\end{equation}
\end{proposition}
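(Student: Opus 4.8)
The plan is to substitute into the left-hand side the local expression for the spinor curvature obtained just above, namely
$$R^{\mathbb{S}}(X,e_\alpha)\psi=\frac12\sum_{i<j}g(R(X,e_\alpha)e_i,e_j)\,e_ie_j\cdot\psi+\frac12\,dA(X,e_\alpha)\,\psi+dB(X,e_\alpha)\,\psi,$$
then Clifford-multiply each summand on the left by $e_\alpha$, sum over $\alpha$, and treat the three resulting pieces — the Riemannian curvature term, the $dA$-term and the $dB$-term — independently. Since the target formula also splits as a Ricci term plus a $dA$-term plus a $dB$-term, it suffices to match the three pieces one at a time.

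For the two connection pieces I would use that for any $i\mathbb{R}$-valued $2$-form $\beta$ one has $\sum_\alpha\beta(X,e_\alpha)\,e_\alpha=X\mathrel{\lrcorner}\beta$ as a $1$-form (under the metric identification of $e^\alpha$ with $e_\alpha$), so that its Clifford action is exactly $(X\mathrel{\lrcorner}\beta)\cdot$. Applying this with $\beta=dA$ and $\beta=dB$ immediately yields the terms $\tfrac12(X\mathrel{\lrcorner}dA)\cdot\psi$ and $(X\mathrel{\lrcorner}dB)\cdot\psi$, so these two contributions need no further computation.

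The real content is the curvature piece $\tfrac12\sum_{\alpha}\sum_{i<j}g(R(X,e_\alpha)e_i,e_j)\,e_\alpha e_ie_j\cdot\psi$. First I would remove the restriction $i<j$, replacing $\sum_{i<j}$ by $\tfrac12\sum_{i,j}$, which is legitimate because both $g(R(X,e_\alpha)e_i,e_j)$ and $e_ie_j$ are antisymmetric in $(i,j)$ and the diagonal $i=j$ contributes nothing. I would then split the triple Clifford product $e_\alpha e_ie_j$ according to the coincidences among $\alpha,i,j$. The terms with $\alpha=i$ or with $\alpha=j$ collapse, via $e_\alpha^{2}=-1$ and $e_\alpha e_ie_\alpha=e_i$ for $\alpha\neq i$, to traces of the curvature tensor, that is, to two Ricci contractions; using the symmetry of $Ric$ and keeping track of the signs produced by the Clifford relations, these combine with the overall factor $\tfrac14$ (one half from $\sum_{i<j}=\tfrac12\sum_{i,j}$ and one half from the coefficient of $R^{\mathbb{S}}$) to give $-\tfrac12\,Ric(X)\cdot\psi$. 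The decisive step is the family in which $\alpha,i,j$ are pairwise distinct: there $e_\alpha e_ie_j$ is totally antisymmetric, so the sum extracts the antisymmetrization of $g(R(X,e_\alpha)e_i,e_j)$ over $(\alpha,i,j)$, which vanishes by the first Bianchi identity. Adding the two connection terms then completes the identity.

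I expect the main obstacle to be exactly this curvature computation — correctly invoking the first Bianchi identity to annihilate the totally-distinct part, and then carefully bookkeeping the Clifford signs and the numerical factors so that the two surviving Ricci contractions merge into a single $-\tfrac12\,Ric(X)$ — rather than the handling of the $dA$ and $dB$ terms, which is routine once the interior-product identity is in place.
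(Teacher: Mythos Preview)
Your proposal is correct and follows essentially the same route as the paper: you split $R^{\mathbb{S}}$ into its Riemannian, $dA$ and $dB$ pieces, dispatch the latter two via the interior-product identity $\sum_\alpha \beta(X,e_\alpha)\,e_\alpha=X\mathrel{\lrcorner}\beta$, and reduce the curvature piece to $-\tfrac12\,Ric(X)\cdot\psi$ using the first Bianchi identity and Clifford relations. The paper proceeds identically, the only difference being that it quotes the curvature-to-Ricci identity from Friedrich's book rather than sketching its proof as you do.
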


\begin{proof}
In \cite{F} it is proved the following relation:
\begin{equation}\label{31}
\overset{n}{\underset{\alpha=1}{\sum}}\overset{}{\underset{i<j}{\sum}}\overset{n}{\underset{k,l=1}{\sum}}R_{klij}(e^k\wedge e^l)e_{\alpha}e_ie_j\cdot\psi=-2Ric(X)\cdot \psi
\end{equation}
It can be easily seen the following two relations:
\begin{equation}\label{32}
\displaystyle\sum_{\alpha=1}^n e_{\alpha}\cdot dA(X,e_{\alpha})\cdot \psi=(X\mathrel{\lrcorner} dA)\cdot \psi
\end{equation}
and
\begin{equation}\label{33}
\displaystyle\sum_{\alpha=1}^n e_{\alpha}\cdot dB(X,e_{\alpha})\cdot \psi=(X\mathrel{\lrcorner} dB)\cdot \psi.
\end{equation}
Then, using (\ref{31}), (\ref{32}) and (\ref{33}), we obtain the claimed equivalence.
\end{proof}

Now, we derive Schrödinger-Lichnerowicz-type formula in the following way:

\begin{proposition}
Let $s$ be scalar curvature of the Riemannian manifold and let $dA=\Omega^A$ and $dB=\Omega^B$ be the imaginary-valued $2-$forms of the connections $(A,B)$ in the $(S^1\times S^1)-$bundle associated with Spin$^T$ structure. Then, we have the following formula:
$$D^2\psi=\Delta \psi+\frac{s}{4}\psi+\frac{1}{2}dA\cdot \psi+dB\cdot \psi.$$
\end{proposition}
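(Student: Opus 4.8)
The plan is to expand $D^2$ in a local orthonormal frame and compare it term by term with the connection Laplacian $\Delta$. Working at a fixed point $p\in M$, I would choose the frame $\{e_1,\ldots,e_n\}$ to be synchronous at $p$, that is $\nabla_{e_i}e_j|_p=0$; this forces $div(e_i)|_p=0$ and $[e_i,e_j]|_p=0$, which kills the lower-order terms and lets the second covariant derivatives reassemble into curvature. Using the local expression (2) for the Dirac operator, at $p$ one has
$$D^2\psi=\sum_{i,j}e_i\cdot e_j\cdot \widetilde{\nabla}_{e_i}\widetilde{\nabla}_{e_j}\psi.$$

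Next I would split this double sum into its diagonal and off-diagonal parts. For $i=j$ the Clifford relation $e_i\cdot e_i=-1$ gives $-\sum_i\widetilde{\nabla}_{e_i}\widetilde{\nabla}_{e_i}\psi$, which is precisely $\Delta\psi$ at $p$ by the definition (3), since the divergence terms there vanish for a synchronous frame. For $i\neq j$, antisymmetry of $e_i\cdot e_j$ together with $[e_i,e_j]|_p=0$ lets me recognize
$$\sum_{i<j}e_i\cdot e_j\cdot\bigl(\widetilde{\nabla}_{e_i}\widetilde{\nabla}_{e_j}-\widetilde{\nabla}_{e_j}\widetilde{\nabla}_{e_i}\bigr)\psi=\sum_{i<j}e_i\cdot e_j\cdot R^{\mathbb{S}}(e_i,e_j)\psi.$$
Thus $D^2\psi=\Delta\psi+\sum_{i<j}e_i\cdot e_j\cdot R^{\mathbb{S}}(e_i,e_j)\psi$, and the whole problem reduces to evaluating the curvature sum.

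To evaluate it I would symmetrize, writing $\sum_{i<j}=\tfrac12\sum_{i,j}$ (the diagonal contributes nothing since $R^{\mathbb{S}}(e_i,e_i)=0$), so that
$$\sum_{i<j}e_i\cdot e_j\cdot R^{\mathbb{S}}(e_i,e_j)\psi=\frac12\sum_{i}e_i\cdot\Bigl(\sum_{j}e_j\cdot R^{\mathbb{S}}(e_i,e_j)\psi\Bigr),$$
and then apply the previous Proposition with $X=e_i$ to the inner sum, using (\ref{36}). This produces three contributions. The Ricci term yields $-\tfrac14\sum_i e_i\cdot Ric(e_i)\cdot\psi$; expanding $Ric(e_i)=\sum_j Ric_{ij}e_j$ and using that $Ric_{ij}$ is symmetric while $e_i\cdot e_j$ is antisymmetric for $i\neq j$ collapses this to the scalar-curvature term $\tfrac{s}{4}\psi$. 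For the $dA$ and $dB$ contributions I would invoke the Clifford identity $\sum_i e_i\cdot(e_i\mathrel{\lrcorner}\omega)=2\omega$, valid for any $2$-form $\omega$, which turns $\tfrac14\sum_i e_i\cdot(e_i\mathrel{\lrcorner}dA)\cdot\psi$ into $\tfrac12 dA\cdot\psi$ and $\tfrac12\sum_i e_i\cdot(e_i\mathrel{\lrcorner}dB)\cdot\psi$ into $dB\cdot\psi$. Assembling the three pieces gives exactly the claimed formula.

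The Clifford bookkeeping in the last paragraph is routine; the step I expect to require the most care is the off-diagonal identification, where one must be sure the synchronous-frame hypotheses ($div(e_i)|_p=0$ and $[e_i,e_j]|_p=0$) are genuinely in force, so that the second derivatives recombine into $R^{\mathbb{S}}$ with no leftover first-order terms, and that the resulting pointwise identity is frame-independent and hence globally valid.
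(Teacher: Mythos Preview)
Your proof is correct and reaches the same intermediate identity
\[
D^2\psi=\Delta\psi+\tfrac12\sum_{i,j}e_ie_j\cdot R^{\mathbb S}(e_i,e_j)\psi
\]
as the paper, after which both arguments invoke the preceding proposition and the same Clifford bookkeeping to finish. The route to that intermediate identity, however, is different. You work pointwise in a synchronous frame, so the terms $\nabla_{e_i}e_j$, $div(e_i)$, and $[e_i,e_j]$ all vanish at $p$ and the second derivatives immediately reassemble into $R^{\mathbb S}$. The paper instead keeps a general orthonormal frame and tracks the first-order terms explicitly: it rewrites $\sum_{i\neq k}g(\nabla_{e_i}e_j,e_k)e_ie_k$ as $\sum_{i<k}g(e_j,[e_k,e_i])e_ie_k$, which then supplies exactly the $-\widetilde\nabla_{[e_i,e_j]}\psi$ piece needed to complete the curvature. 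Your approach is shorter and is the standard textbook shortcut; the paper's computation has the virtue of being frame-independent throughout, so no separate argument is needed that the pointwise identity globalizes. Either way the endgame with $Ric$, $dA$, and $dB$ is identical.
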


\begin{proof}
\begin{equation}\label{34}
\begin{array}{lll}
     D^2\psi & = &\displaystyle \sum_{i,j}e_i\cdot \widetilde{\nabla}_{e_i}(e_j\cdot \widetilde{\nabla}_{e_j}\psi) \\
      & =  & \displaystyle \sum_{i,j}e_i\cdot \nabla_{e_i}e_j\cdot \widetilde{\nabla}_{e_j}\psi+ e_ie_j\cdot \widetilde{\nabla}_{e_i}\widetilde{\nabla}_{e_j}\psi \\
      & =  & \displaystyle \sum_{i,j,k}g(\nabla_{e_i}e_j,e_k) e_ie_k\cdot \widetilde{\nabla}_{e_j}\psi+ \sum_{i,j}e_ie_j\cdot \widetilde{\nabla}_{e_i}\widetilde{\nabla}_{e_j}\psi  \\ 
      & =  & \Delta \psi+\displaystyle \sum_{j,i\neq k}g(\nabla_{e_i}e_j,e_k) e_ie_k\cdot\widetilde{\nabla}_{e_j}\psi+ \sum_{i\neq j}e_ie_j\cdot \widetilde{\nabla}_{e_i}\widetilde{\nabla}_{e_j}\psi\\
\end{array}
\end{equation}
Now we can calculate the following sum:
\begin{equation*}
\begin{array}{lll}
   \displaystyle \sum_{i\neq k}g(\nabla_{e_i}e_j,e_k) e_ie_k & = & -\displaystyle \sum_{i\neq k}g(e_j,\nabla_{e_i}e_k) e_ie_k\\
   &=&  -\displaystyle \sum_{i< k}g(e_j,\nabla_{e_i}e_k-\nabla_{e_k}e_i) e_ie_k\\
      &=& \displaystyle \sum_{i< k}g(e_j,[e_k,e_i]) e_ie_k\\
   \end{array}
\end{equation*}
From (\ref{34}) we get
\begin{equation*}
\begin{array}{lll}
     D^2\psi & =  & \Delta \psi+\displaystyle \sum_{j,i< k}g(e_j,[e_k,e_i]) e_ie_k\widetilde{\nabla}_{e_j}\psi+ \sum_{i<j}e_ie_j\cdot (\widetilde{\nabla}_{e_i}\widetilde{\nabla}_{e_j}\psi-\widetilde{\nabla}_{e_j}\widetilde{\nabla}_{e_i}\psi)\\
     &=&\Delta \psi+\displaystyle \sum_{i< j}e_ie_j(\widetilde{\nabla}_{e_i}\widetilde{\nabla}_{e_j}\psi-\widetilde{\nabla}_{e_j}\widetilde{\nabla}_{e_i}\psi-\widetilde{\nabla}_{[e_i,e_j]}\psi)\\
          &=&\Delta \psi+\displaystyle\frac{1}{2}\sum_{i, j}e_ie_jR^{\mathbb{S}}(e_i,e_j)\psi.\\
\end{array}
\end{equation*}
Using the identity (\ref{36}) and multiplying by $e_i$ we deduce

\begin{equation*}
\begin{array}{lll}
     D^2\psi &=&\Delta \psi-\displaystyle\frac{1}{4}\sum_{i}e_iRic(e_i)\cdot \psi+\frac{1}{4}\sum_{i}e_i\cdot(e_i \mathrel{\lrcorner}dA)\cdot \psi+\frac{1}{2}\sum_{i}e_i\cdot(e_i \mathrel{\lrcorner}dB)\cdot \psi\\
 &=&\Delta \psi+\displaystyle\frac{s}{4}\psi+\displaystyle\frac{1}{2}dA\cdot \psi+dB\cdot \psi.
\end{array}
\end{equation*}

\end{proof}

\end{document}